\newtheorem{thm}{Theorem}
\newtheorem{lem}{Lemma}
\newtheorem{cor}{Corollary}
\newtheorem{prop}{Proposition}
\newcounter{alg}
\newlength{\lefttab}
\newlength{\numberoffset}
\begin{document}
\title{A Class of Binomial Permutation Polynomials}
\author{Ziran Tu
\thanks{Z. Tu is with the School of Mathematics and Statistics, Henan University of Science and
Technology, Luoyang 471003, China. Email: naturetu@gmail.com.},
Xiangyong Zeng
\thanks{X. Zeng is with the Faculty of Mathematics and Computer Science, Hubei
University, Wuhan 430062,
 China. Email:
xyzeng@hubu.edu.cn.}, Lei Hu
\thanks{L. Hu is with the State Key Laboratory of Information Security,
Institute of Information Engineering, Chinese Academy of Sciences, Beijing {\rm 100093},
China. Email:
hu@is.ac.cn.}, Chunlei Li
\thanks{C. Li is with the Department of Informatics,
University of Bergen, Bergen {\rm N-5020}, Norway. Email: chunlei.li@ii.uib.no}}
\date{}
\maketitle
\begin{quote}
{\small {\bf Abstract:} In this note, a criterion for a class of binomials to be permutation polynomials is proposed.
As a consequence, many classes of binomial permutation polynomials and monomial complete permutation polynomials are obtained.
The exponents in these monomials are of Niho type.}

{\small {\bf Keywords:}} Permutation polynomial, complete
permutation polynomial, trace function, Walsh spectrum.

{\small {\bf MSC:}} 05A05, 11T06, 11T55
\end{quote}

\section{Introduction}
For a prime power $q$, let $\mathbb{F}_{q}$ be the finite field with $q$ elements and $\mathbb{F}_{q}^*$ denote its multiplicative group.
  A polynomial $f\in \mathbb{F}_{q}[x]$
is called {\it a permutation polynomial} (PP) if its associated polynomial
mapping $f$: $c\mapsto f(c)$ from $\mathbb{F}_{q}$ to itself is a
bijection.  A permutation polynomial $f(x)$ is referred to as {\it a complete permutation polynomial} (CPP)
if $f(x)+x$ is also a permutation over $\mathbb{F}_{q}$ \cite{Niederreiter}.
Permutation polynomials were studied first by Hermite in \cite{hermite} for
the case of finite prime fields and by Dickson in \cite{dickson} for arbitrary finite fields.
Permutation polynomials have been intensively studied in recent years for their applications in
cryptography, coding theory and
combinatorial design theory. Complete  classification  of permutation polynomials
seems to be a very challenging problem. However,
a lot of interesting results on permutation polynomials were found through
studying monomials, binomials,  quadratic polynomials and some polynomials with special forms.

In this note, for an even integer $n$ and an integer $t\geq 2$, we investigate the polynomials of the form
\begin{equation}\label{form1}
f(x)=\sum_{i=1}^{t}u_{i}x^{d_{i}},
\end{equation}
where for each $i$ with $1\leq i\leq t$, the element $u_{i}\in \mathbb{F}_{2^n}$ and
$d_{i}\equiv e \,\left({\rm mod}\, 2^{\frac{n}{2}}-1\right)$ for a positive integer $e$. In the case of ${\rm gcd}(d_{1},2^n-1)=1$ and $t=2$,
a criterion for the binomials having the form as in (\ref{form1}) to be permutation polynomials is proposed.
As a consequence, a number of classes of binomial permutation polynomials
and monomial complete permutation polynomials are obtained. The proofs of our
main results are based on a criterion for permutation polynomials in terms of additive characters of the underlying
finite fields \cite{Lidl}.

The remainder of this paper is organized as follows. In Section 2,
we introduce some basic concepts and related results. In Section 3,
a criterion for a class of binomials to be permutation polynomials is given.
Several classes of binomial permutation polynomials and monomial complete permutation polynomials are obtained.
A conjecture on two classes of trinomials is presented in Section 4.

\section{Preliminaries}
For two positive integers $m$ and $n$ with $m\,|\,n$, we use ${\rm
Tr}_{m}^{n}(\cdot)$ to denote the {\it trace function} from
$\mathbb{F}_{2^n}$ to $\mathbb{F}_{2^m}$, i.e.,
$${\rm Tr}_{m}^{n}(x)=x+x^{2^m}+x^{2^{2m}}+\cdot\cdot\cdot+x^{2^{(n/m-1)m}}.$$
For a Boolean function $f$:
$\mathbb{F}_{2^n}\rightarrow\mathbb{F}_{2}$, the {\it Walsh
spectrum} of $f$ at $a\in \mathbb{F}_{2^n}$ is defined by
$$W_f(a)=\sum\limits_{x\in \mathbb{F}_{2^n}}(-1)^{f(x)+{\rm Tr}^n_1(ax)}.$$

A criterion for PPs can be given by using additive characters of the underlying finite field \cite{Lidl}.
This can also be characterized by judging whether the Walsh spectra of some Boolean functions at $0$ are equal to zero.
\begin{lem} (\cite{Lidl}) \label{lem1} A mapping $g$: $\mathbb{F}_{2^n}\rightarrow
\mathbb{F}_{2^n}$  is a permutation polynomial of $\mathbb{F}_{2^n}$
if and only if for every nonzero $\gamma\in \mathbb{F}_{2^{n}}$,
 $$\sum_{x\in \mathbb{F}_{2^{n}}}(-1)^{{\rm Tr}_{1}^{n}(\gamma g(x))}=0.$$
\end{lem}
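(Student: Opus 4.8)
The plan is to deduce both directions from the orthogonality relations for the additive characters of $\mathbb{F}_{2^n}$. Recall that for each $\gamma\in\mathbb{F}_{2^n}$ the map $\chi_\gamma(y)=(-1)^{{\rm Tr}_1^n(\gamma y)}$ is an additive character, and that these characters satisfy
$$\sum_{\gamma\in\mathbb{F}_{2^n}}(-1)^{{\rm Tr}_1^n(\gamma y)}=\begin{cases}2^n,& y=0,\\ 0,& y\neq 0,\end{cases}$$
equivalently $\sum_{y\in\mathbb{F}_{2^n}}(-1)^{{\rm Tr}_1^n(\gamma y)}=0$ whenever $\gamma\neq 0$. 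Everything reduces to bookkeeping with this single identity; the only genuine content is translating the word ``bijection'' into a statement about these sums.

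For the necessity direction I would argue directly. If $g$ is a permutation of $\mathbb{F}_{2^n}$, then as $x$ runs over $\mathbb{F}_{2^n}$ the value $g(x)$ runs over every element of $\mathbb{F}_{2^n}$ exactly once. Hence for any fixed nonzero $\gamma$ I can reindex the sum by $y=g(x)$ and obtain
$$\sum_{x\in\mathbb{F}_{2^n}}(-1)^{{\rm Tr}_1^n(\gamma g(x))}=\sum_{y\in\mathbb{F}_{2^n}}(-1)^{{\rm Tr}_1^n(\gamma y)}=0,$$
the last equality being the orthogonality relation above. This direction is immediate.

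For sufficiency I would count preimages. For $b\in\mathbb{F}_{2^n}$ let $N(b)=|\{x\in\mathbb{F}_{2^n}:g(x)=b\}|$. Using the indicator form of the orthogonality relation to detect $g(x)=b$, and using $g(x)-b=g(x)+b$ in characteristic $2$, I write
$$N(b)=\frac{1}{2^n}\sum_{x\in\mathbb{F}_{2^n}}\sum_{\gamma\in\mathbb{F}_{2^n}}(-1)^{{\rm Tr}_1^n(\gamma(g(x)+b))}.$$
Separating the $\gamma=0$ term, which contributes $2^n$, and interchanging the two finite sums gives
$$N(b)=1+\frac{1}{2^n}\sum_{\gamma\in\mathbb{F}_{2^n}^{*}}(-1)^{{\rm Tr}_1^n(\gamma b)}\sum_{x\in\mathbb{F}_{2^n}}(-1)^{{\rm Tr}_1^n(\gamma g(x))}.$$
Under the hypothesis that the inner sum vanishes for every nonzero $\gamma$, the second term is $0$, so $N(b)=1$ for every $b$, and $g$ is a bijection.

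The computation is entirely routine, so there is no real obstacle; the only point requiring a little care is the sufficiency direction, where one must isolate the $\gamma=0$ contribution correctly (the two finite sums may be freely interchanged). An alternative to the explicit preimage count is to note that the system of equations $N(b)=1$ for all $b$ is equivalent, via the invertibility of the additive-character transform on $\mathbb{F}_{2^n}$, to the vanishing of all the sums $\sum_{x}(-1)^{{\rm Tr}_1^n(\gamma g(x))}$ with $\gamma\neq 0$; this packages both directions at once but leans on the nondegeneracy of the character matrix.
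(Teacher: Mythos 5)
Your proof is correct: the necessity direction is the reindexing $y=g(x)$ combined with the vanishing of a nontrivial additive character sum, and the sufficiency direction correctly isolates the $\gamma=0$ term in the preimage count to get $N(b)=1$ for every $b$. The paper itself offers no proof of this lemma --- it is quoted directly from Lidl and Niederreiter's \emph{Finite Fields} --- and your argument is precisely the standard character-orthogonality proof given there, so there is nothing to compare beyond noting that you have supplied the omitted details accurately.
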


For a polynomial $f(x)$ having the form as in (\ref{form1}), without loss of generality, we can assume
that $u_1=1$ and $d_i=s_i\left(2^{\frac{n}{2}}-1\right)+e$ for integers $s_i$ and $e$,
where $1\leq i\leq t$. In the case of ${\rm gcd}(d_{1},2^n-1)=1$,
every nonzero $\gamma\in \mathbb{F}_{2^n}$ can be represented as $\delta^{d_{1}}$ for a unique nonzero $\delta\in \mathbb{F}_{2^n}$. Then
\begin{eqnarray*}
  \sum_{x\in \mathbb{F}_{2^n}}(-1)^{{\rm Tr}_{1}^{n}(\gamma f(x))}
  &=&\sum\limits_{x\in \mathbb{F}_{2^n}}(-1)^{{\rm Tr}_{1}^{n}\left(\gamma\left(\sum\limits_{i=1}^{t}u_{i}x^{d_{i}}\right)\right)} \\
  &=& \sum_{x\in \mathbb{F}_{2^n}}(-1)^{{\rm Tr}_{1}^{n}\left((\delta x)^{d_{1}}+\sum\limits_{i=2}^{t}u_{i}\delta^{d_{1}-d_{i}}(\delta x)^{d_{i}}\right)} \\
   &=&  \sum_{x\in \mathbb{F}_{2^n}}(-1)^{{\rm Tr}_{1}^{n}\left(x^{d_{1}}+\sum\limits_{i=2}^{t}u_{i}\delta^{d_{1}-d_{i}}x^{d_{i}}\right)}.
   \end{eqnarray*}
By Lemma \ref{lem1}, a  criterion for $f(x)$ to be a PP is given as follows.

\begin{cor}\label{cor1} Let $f(x)=x_1^{d_1}+\sum\limits_{i=2}^{t}u_{i}x^{d_{i}}$ be defined as in (\ref{form1}) and ${\rm gcd}(d_{1},2^n-1)=1$. The polynomial $f$ is a permutation polynomial over  $\mathbb{F}_{2^n}$ if and only if for every $\delta\in \mathbb{F}_{2^n}^{*}$,
\begin{equation}\label{eq2}
    \sum_{x\in \mathbb{F}_{2^n}}(-1)^{{\rm Tr}_{1}^{n}\left(x^{d_{1}}+\sum\limits_{i=2}^{t}u_{i}\delta^{d_{1}-d_{i}}x^{d_{i}}\right)}=0.
\end{equation}
\end{cor}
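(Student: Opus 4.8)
The plan is to derive the stated criterion as an immediate consequence of Lemma \ref{lem1} together with the two bijections already exhibited in the computation preceding the statement. First I would invoke Lemma \ref{lem1}: the polynomial $f$ is a permutation of $\mathbb{F}_{2^n}$ if and only if $\sum_{x\in\mathbb{F}_{2^n}}(-1)^{{\rm Tr}_1^n(\gamma f(x))}=0$ holds for every nonzero $\gamma$. The goal is then to rewrite this family of character-sum conditions, indexed by $\gamma\in\mathbb{F}_{2^n}^*$, as the equivalent family indexed by $\delta\in\mathbb{F}_{2^n}^*$ appearing in (\ref{eq2}).

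The key structural input is the hypothesis ${\rm gcd}(d_1,2^n-1)=1$, which guarantees that the power map $\delta\mapsto\delta^{d_1}$ permutes $\mathbb{F}_{2^n}^*$. Hence I may substitute $\gamma=\delta^{d_1}$ and let $\delta$ range over all of $\mathbb{F}_{2^n}^*$ without losing or repeating any value of $\gamma$. With the normalization $u_1=1$, the exponent becomes $\gamma f(x)=\delta^{d_1}x^{d_1}+\sum_{i=2}^t u_i\delta^{d_1}x^{d_i}$, and I would absorb the factor $\delta^{d_1}$ by writing $\delta^{d_1}x^{d_1}=(\delta x)^{d_1}$ and $\delta^{d_1}x^{d_i}=\delta^{d_1-d_i}(\delta x)^{d_i}$. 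A second substitution, replacing $x$ by $\delta x$ — which is a bijection of $\mathbb{F}_{2^n}$ because $\delta\neq 0$ — then leaves the summation range unchanged and turns the exponent into $x^{d_1}+\sum_{i=2}^t u_i\delta^{d_1-d_i}x^{d_i}$, yielding exactly the sum in (\ref{eq2}).

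Since both reindexings, $\gamma\leftrightarrow\delta$ and $x\mapsto\delta x$, are bijections, the condition ``$\sum_x(-1)^{{\rm Tr}_1^n(\gamma f(x))}=0$ for all $\gamma\in\mathbb{F}_{2^n}^*$'' is logically equivalent, term by term, to ``(\ref{eq2}) holds for all $\delta\in\mathbb{F}_{2^n}^*$,'' which completes the argument. There is no genuine obstacle here; the only point requiring care is the bookkeeping, namely verifying that the quantifier ``for every nonzero $\gamma$'' transfers faithfully to ``for every nonzero $\delta$.'' This is precisely where the coprimality assumption ${\rm gcd}(d_1,2^n-1)=1$ is essential and cannot be dropped, since it is what makes $\delta\mapsto\delta^{d_1}$ a one-to-one correspondence on $\mathbb{F}_{2^n}^*$ rather than merely a surjection with multiplicities.
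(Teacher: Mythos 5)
Your argument is correct and coincides with the paper's own derivation: the text preceding Corollary \ref{cor1} performs exactly the substitution $\gamma=\delta^{d_1}$ (justified by ${\rm gcd}(d_1,2^n-1)=1$) followed by the change of variable $x\mapsto\delta x$, and then appeals to Lemma \ref{lem1}. Nothing further is needed.
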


 If there is a positive integer $e$ such that the exponent $d_{i}\equiv e \,\left({\rm mod}\, 2^{\frac{n}{2}}-1\right)$ for each $i$ with $1\leq i\leq t$, the exponential sum $\sum\limits_{x\in \mathbb{F}_{2^n}}(-1)^{{\rm Tr}_{1}^{n}\left(x^{d_{1}}+\sum\limits_{i=2}^{t}u_{i}\delta^{d_{1}-d_{i}}x^{d_{i}}\right)}$ can be analyzed by a technique used in \cite{Niho}. To this end, we recall the concept of {\it unit circle} of $\mathbb{F}_{2^{2m}}$, which is the set
\begin{equation}\label{set}U=\left\{\lambda\in \mathbb{F}_{2^{2m}}:\lambda^{2^m+1}=1\right\}.\end{equation}

\begin{lem}\label{lem2}
Let $n=2m$ for a positive integer $m$, ${\rm gcd}(d_{1},2^n-1)=1$ and
$d_{i}=s_{i}(2^m-1)+e$ for $i=1,\cdots,t$. For $w_2, \cdots, w_t\in\mathbb{F}_{2^n}$,
the exponential sum
$$\sum_{x\in \mathbb{F}_{2^n}}(-1)^{{\rm Tr}_{1}^{n}\left(x^{d_{1}}+\sum\limits_{i=2}^{t}w_ix^{d_{i}}\right)}=(N(w_{2},\cdots,w_{t})-1)\cdot 2^m,
$$
where $N(w_{2},\cdots,w_{t})$ is the number of $\lambda's$ in $U$ such that
\begin{equation}\label{eq1}
    \lambda^{d_{1}}+\sum\limits_{i=2}^{t}w_i\lambda^{d_{i}}+\left(\lambda^{d_{1}}+\sum\limits_{i=2}^{t}w_i\lambda^{d_{i}}\right)^{2^m}=0.
\end{equation}

\end{lem}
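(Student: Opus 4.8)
The plan is to exploit the Niho shape of the exponents together with the multiplicative structure of $\mathbb{F}_{2^n}^*$ attached to the unit circle $U$, thereby collapsing the two-variable exponential sum into a count of solutions of (\ref{eq1}) on $U$. First I would separate off the term $x=0$, which contributes $1$, and parametrize the nonzero $x$. Since $|\mathbb{F}_{2^n}^*|=(2^m-1)(2^m+1)$ and ${\rm gcd}(2^m-1,2^m+1)=1$, the subgroup $\mathbb{F}_{2^m}^*$ of order $2^m-1$ and the subgroup $U$ of order $2^m+1$ meet only in $1$, so every $x\in\mathbb{F}_{2^n}^*$ is uniquely of the form $x=y\lambda$ with $y\in\mathbb{F}_{2^m}^*$ and $\lambda\in U$.

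The crucial step is to rewrite $x^{d_i}$ in these coordinates. Using $x^{2^m}=y\lambda^{2^m}=y\lambda^{-1}$ (because $y\in\mathbb{F}_{2^m}$ and $\lambda^{2^m+1}=1$) gives $x^{2^m-1}=\lambda^{-2}$, and since $d_i=s_i(2^m-1)+e$ one obtains $x^{d_i}=(x^{2^m-1})^{s_i}x^e=\lambda^{-2s_i}y^e\lambda^e=y^e\lambda^{d_i}$, where the same identity $\lambda^{2^m-1}=\lambda^{-2}$ yields $\lambda^{d_i}=\lambda^{e-2s_i}$. Hence the entire exponent factors as $x^{d_1}+\sum_{i=2}^{t}w_ix^{d_i}=y^ef(\lambda)$, where I write $f(\lambda):=\lambda^{d_1}+\sum_{i=2}^{t}w_i\lambda^{d_i}$.

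Next I would use ${\rm Tr}_1^n={\rm Tr}_1^m\circ{\rm Tr}_m^n$ and pull the factor $y^e\in\mathbb{F}_{2^m}$ through the inner trace, so that ${\rm Tr}_1^n(y^ef(\lambda))={\rm Tr}_1^m(y^eT(\lambda))$, where $T(\lambda):=f(\lambda)+f(\lambda)^{2^m}$ is exactly the left-hand side of (\ref{eq1}) and lies in $\mathbb{F}_{2^m}$. The sum over nonzero $x$ then becomes $\sum_{\lambda\in U}\sum_{y\in\mathbb{F}_{2^m}^*}(-1)^{{\rm Tr}_1^m(y^eT(\lambda))}$. To evaluate the inner sum I invoke ${\rm gcd}(e,2^m-1)=1$, which follows from ${\rm gcd}(d_1,2^n-1)=1$ since $d_1\equiv e\pmod{2^m-1}$ and $(2^m-1)\mid(2^n-1)$; thus $y\mapsto y^e$ permutes $\mathbb{F}_{2^m}^*$, and by additive-character orthogonality over $\mathbb{F}_{2^m}$ the inner sum equals $2^m-1$ when $T(\lambda)=0$ and $-1$ otherwise. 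Writing $N=N(w_2,\dots,w_t)$ for the number of $\lambda\in U$ with $T(\lambda)=0$ and using $|U|=2^m+1$, the total equals $1+N(2^m-1)+(2^m+1-N)(-1)=(N-1)2^m$, as claimed.

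The main obstacle is the factorization step of the second paragraph: it is the Niho structure $d_i=s_i(2^m-1)+e$ that forces $x^{d_i}$ into the clean separated form $y^e\lambda^{d_i}$, cleanly splitting the $\mathbb{F}_{2^m}^*$-variable $y$ from the $U$-variable $\lambda$. Once this separation is secured, the trace decomposition and the character sum over $y$ are routine; I would nonetheless double-check the role of ${\rm gcd}(e,2^m-1)=1$ (without it the substitution $y\mapsto y^e$ fails) and the final bookkeeping with $|U|=2^m+1$.
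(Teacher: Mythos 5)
Your proposal is correct and follows essentially the same route as the paper: polar decomposition $x=y\lambda$ with $y\in\mathbb{F}_{2^m}^*$, $\lambda\in U$, the Niho structure to reduce $x^{d_i}$ to $y^e\lambda^{d_i}$, the trace decomposition ${\rm Tr}_1^n={\rm Tr}_1^m\circ{\rm Tr}_m^n$, and the substitution $y\mapsto y^e$ justified by $\gcd(e,2^m-1)=1$. The only cosmetic difference is that the paper completes the inner sum to all of $\mathbb{F}_{2^m}$ (picking up a $-2^m$ correction) whereas you sum over $\mathbb{F}_{2^m}^*$ directly; the bookkeeping agrees.
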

\begin{proof}
Using the polar coordinate representation \cite{Niho}, every nonzero $x\in \mathbb{F}_{2^n}$ can be uniquely  represented as $x=\lambda y$, where $\lambda \in U$ and $y\in \mathbb{F}_{2^m}^{*}$.  Thus,
\begin{eqnarray*}
  &&\sum_{x\in \mathbb{F}_{2^n}}(-1)^{{\rm Tr}_{1}^{n}\left(x^{d_{1}}+\sum\limits_{i=2}^{t}w_ix^{d_{i}}\right)}\\
   &=& 1+ \sum_{x\in \mathbb{F}_{2^n}^{*}}(-1)^{{\rm Tr}_{1}^{n}\left(x^{d_{1}}+\sum\limits_{i=2}^{t}w_ix^{d_{i}}\right)} \\
   &=& 1+\sum_{\lambda\in U}\sum_{y\in \mathbb{F}_{2^m}^{*}}(-1)^{{\rm Tr}_{1}^{n}\left((\lambda y)^{d_{1}}+\sum\limits_{i=2}^{t}w_i(\lambda y)^{d_{i}}\right)} \\
   &=& -2^m+\sum_{\lambda\in U}\sum_{y\in \mathbb{F}_{2^m}}(-1)^{{\rm Tr}_{1}^{n}\left(\lambda^{d_{1}} y^{e}+\sum\limits_{i=2}^{t}w_i\lambda^{d_{i}} y^{e}\right)}  \\
   &=& -2^m+\sum_{\lambda\in U}\sum_{y\in \mathbb{F}_{2^m}}(-1)^{{\rm Tr}_{1}^{m}\left({\rm Tr}_{m}^{n}\left(\lambda^{d_{1}} y^{e}+\sum\limits_{i=2}^{t}w_i\lambda^{d_{i}} y^{e}\right)\right)}  \\
   &=& -2^m+\sum_{\lambda\in U}\sum_{y\in \mathbb{F}_{2^m}}(-1)^{{\rm Tr}_{1}^{m}\left(\left(\lambda^{d_{1}}+\sum\limits_{i=2}^{t}w_i\lambda^{d_{i}}+
   \left(\lambda^{d_{1}}+\sum\limits_{i=2}^{t}w_i\lambda^{d_{i}}\right)^{2^m}\right)y^{e}\right)}.
   \end{eqnarray*}
Note that ${\rm gcd}(d_{1},2^n-1)=1$ implies ${\rm gcd}(e,2^m-1)=1$. Consequently, $y^e$ runs through all elements in $\mathbb{F}_{2^m}$ as $y$ runs through all elements in  $\mathbb{F}_{2^m}$. Thus,
   \begin{eqnarray*}
   &&\sum_{x\in \mathbb{F}_{2^n}}(-1)^{{\rm Tr}_{1}^{n}\left(x^{d_{1}}+\sum\limits_{i=2}^{t}w_ix^{d_{i}}\right)}\\
   &=& -2^m+\sum_{\lambda\in U}\sum_{y\in \mathbb{F}_{2^m}}(-1)^{{\rm Tr}_{1}^{m}\left(\left(\lambda^{d_{1}}+\sum\limits_{i=2}^{t}w_i\lambda^{d_{i}}+\left(\lambda^{d_{1}}+
   \sum\limits_{i=2}^{t}w_i\lambda^{d_{i}}\right)^{2^m}\right)y\right)} \\
   &=& (N(w_{2},\cdots,w_{t})-1)\cdot 2^m.
\end{eqnarray*}
Then the proof is completed.
\end{proof}

In the next section, we will apply Corollary \ref{cor1}, Lemmas \ref{lem1} and \ref{lem2} to discuss the polynomials with a form as in (\ref{form1}) for $t=2$.

\section{Binomial Permutation Polynomials}

In this section, we investigate the permutation behavior of the binomials having the form as in (\ref{form1}).

\begin{thm}\label{thm1}
Let positive integers $n$, $m$, $s$, $l$, $e$, $d_1$ and $d_2$ satisfy  $n=2m$, $d_{1}=s(2^m-1)+e$, $d_{2}=(s-l)(2^m-1)+e$ and  ${\rm gcd}(d_{1},2^n-1)=1$. Then
the binomial $$x^{d_{1}}+ux^{d_{2}}$$ is a permutation polynomial over $\mathbb{F}_{2^n}$
if the following conditions are satisfied:

(i) $r:={\rm gcd}(l,2^m+1)>1$;

(ii) ${\rm gcd}(e+l-2s,2^m+1)=1$; and

 (iii) $u\in U\setminus U^r$, where the set $U$ is defined in  (\ref{set}) and $U^r=\{v^r: v\in U\}$.
\end{thm}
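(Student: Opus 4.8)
The plan is to apply Corollary \ref{cor1} together with Lemma \ref{lem2}. By Corollary \ref{cor1}, the binomial $f(x)=x^{d_1}+ux^{d_2}$ is a permutation polynomial over $\mathbb{F}_{2^n}$ if and only if the sum in (\ref{eq2}) vanishes for every $\delta\in\mathbb{F}_{2^n}^*$, where here $w_2=u\delta^{d_1-d_2}$. Since $d_1-d_2=l(2^m-1)$ and $(\delta^{2^m-1})^{2^m+1}=\delta^{2^n-1}=1$, the element $\delta^{2^m-1}$ lies in $U$ and in fact ranges over all of $U$ as $\delta$ ranges over $\mathbb{F}_{2^n}^*$; consequently $w_2=u(\delta^{2^m-1})^l$ ranges over the coset $uU^l=uU^r$ (in the cyclic group $U$ of odd order $2^m+1$ one has $U^l=U^r$ with $r={\rm gcd}(l,2^m+1)$). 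By Lemma \ref{lem2} with $t=2$, the vanishing of the sum is equivalent to $N(w_2)=1$, so it suffices to prove that $N(\omega)=1$ for every $\omega\in uU^r$.

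Next I would simplify the defining equation (\ref{eq1}) of $N(\omega)$. Writing $A=\lambda^{d_1}+\omega\lambda^{d_2}$, equation (\ref{eq1}) reads $A+A^{2^m}=0$, i.e.\ $A\in\mathbb{F}_{2^m}$. For $\lambda\in U$ we have $\lambda^{2^m}=\lambda^{-1}$, hence $\lambda^{d_1}=\lambda^{e-2s}$ and $\lambda^{d_2}=\lambda^{e-2s+2l}$, so that $A=\lambda^{e-2s}(1+\omega\lambda^{2l})$. The key computation is that applying the Frobenius $x\mapsto x^{2^m}$ and using $\omega^{2^m}=\omega^{-1}$ yields
\begin{equation*}
A^{2^m}=\lambda^{2s-e}\bigl(1+\omega^{-1}\lambda^{-2l}\bigr)=\omega^{-1}\lambda^{2s-e-2l}\bigl(1+\omega\lambda^{2l}\bigr),
\end{equation*}
so the common factor $1+\omega\lambda^{2l}$ appears in both $A$ and $A^{2^m}$. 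Cancelling it collapses $A=A^{2^m}$ into a single monomial condition.

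I would then split the count $N(\omega)$ according to whether $A=0$. If $A=0$, i.e.\ $1+\omega\lambda^{2l}=0$, one needs $\lambda^{2l}=\omega^{-1}$; since $2^m+1$ is odd, ${\rm gcd}(2l,2^m+1)=r$, so this equation has $r$ solutions in $U$ when $\omega\in U^r$ and none otherwise. Because $\omega\in uU^r$ with $u\notin U^r$ (condition (iii)), we have $\omega\notin U^r$, so this branch contributes nothing. In the remaining case $1+\omega\lambda^{2l}\neq0$, cancelling this factor in $A=A^{2^m}$ gives $\omega=\lambda^{-2(e+l-2s)}$; by condition (ii), ${\rm gcd}(2(e+l-2s),2^m+1)={\rm gcd}(e+l-2s,2^m+1)=1$, so this equation has exactly one solution $\lambda\in U$, and that $\lambda$ automatically satisfies $1+\omega\lambda^{2l}\neq0$ (otherwise $\omega$ would lie in $U^r$). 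Hence $N(\omega)=1$ for every $\omega\in uU^r$, which establishes the theorem; condition (i) is used only to guarantee that $U\setminus U^r$ is nonempty, so that a valid $u$ exists.

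The main obstacle I anticipate is the algebraic simplification in the second step: recognizing the factorization $1+\omega^{-1}\lambda^{-2l}=\omega^{-1}\lambda^{-2l}(1+\omega\lambda^{2l})$ that exposes the common factor and reduces $A=A^{2^m}$ to the single equation $\omega=\lambda^{-2(e+l-2s)}$. Everything afterward is a careful solution count in the cyclic group $U$, where the oddness of $2^m+1$ lets me replace ${\rm gcd}(2l,2^m+1)$ and ${\rm gcd}(2(e+l-2s),2^m+1)$ by ${\rm gcd}(l,2^m+1)=r$ and ${\rm gcd}(e+l-2s,2^m+1)=1$; these two hypotheses are exactly what force the $A=0$ branch to be empty and the $A\neq0$ branch to contribute a unique $\lambda$.
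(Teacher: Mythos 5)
Your proposal is correct and follows essentially the same route as the paper's proof: reduce via Corollary \ref{cor1} and Lemma \ref{lem2} to counting $\lambda\in U$, factor the resulting equation into the two factors $1+\omega\lambda^{2l}$ and $\omega\lambda^{2(e+l-2s)}+1$ (the paper's $(w\lambda^{d_2}+\lambda^{d_1})(w\lambda^{d_1+d_2}+1)$ after reducing exponents modulo $2^m+1$), kill the first factor with condition (iii), and get a unique solution from the second via condition (ii). The only cosmetic difference is that you reduce the exponents in $U$ before factoring rather than after.
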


\begin{proof}
By  Lemma \ref{lem1} and Corollary \ref{cor1}, it suffices to prove that for each $\delta\in\mathbb{F}_{2^n}^*$, the exponential sum equality
\begin{equation}\label{eq2}
   \sum_{x\in \mathbb{F}_{2^n}}(-1)^{{\rm Tr}_{1}^{n}(x^{d_{1}}+u\delta^{d_1-d_2}x^{d_{2}})}=0
\end{equation}
holds for all $u\in U\setminus U^r$.  By Lemma \ref{lem2}, (\ref{eq2}) holds if and only if the equation
\begin{equation}\label{eq3}
    \lambda^{d_{1}}+w\lambda^{d_{2}}+(\lambda^{d_{1}}+w\lambda^{d_{2}})^{2^m}=0
\end{equation}
 on $\lambda\in U$ has a unique solution in $U$ for any $\delta\in \mathbb{F}_{2^n}^*$, where $w=u\delta^{d_1-d_2}$.

Since $\left(\delta^{d_1-d_2}\right)^{2^m+1}=\left(\delta^{l(2^m-1)}\right)^{2^m+1}=1$,
we have $\delta^{d_1-d_2}\in U$. By Condition (iii),  the element $u$ belongs to the set $U$, and then $w=u\delta^{d_1-d_2}\in U$. Thus,
 Equation (\ref{eq3})  is equivalent to
\begin{equation*}
  \lambda^{d_{1}}+w\lambda^{d_{2}}+\lambda^{-d_{1}}+w^{-1}\lambda^{-d_{2}}=0,
\end{equation*}
 which implies
 \begin{equation*}
  w\lambda^{2d_{1}+d_2}+w^2\lambda^{d_1+2d_{2}}+w\lambda^{d_{2}}+\lambda^{d_{1}}=0,
\end{equation*} and then
\begin{equation}\label{eq6}
  (w\lambda^{d_{2}}+\lambda^{d_{1}})(w\lambda^{d_{1}+d_{2}}+1)=0.
\end{equation}

If $w\lambda^{d_{2}}+\lambda^{d_{1}}=0$, then $u(\delta\lambda^{-1})^{d_1-d_2}=1$.
By Condition (i) and that $d_{1}-d_{2}=l(2^m-1)$, we have $\left(u(\delta\lambda^{-1})^{d_1-d_2}\right)^{\frac{2^m+1}{r}}=1$
and $u^{\frac{2^m+1}{r}}=1$, which means $u\in U^r$ since $U$ is a cyclic group of order $2^m+1$.
Thus, if $u\in U\setminus U^r$, then $w\lambda^{d_{2}}+\lambda^{d_{1}}\neq 0$.
As a consequence, by Equation (\ref{eq6}), we have  $w\lambda^{d_{1}+d_{2}}+1=0$. Condition (ii) shows
$$\begin{array}{rcl}{\rm gcd}(d_{1}+d_{2},2^m+1)&=&{\rm gcd}\big((2s-l)(2^m-1)+2e,2^m+1\big)\\
&=&{\rm gcd}\big(-2(2s-l)+2e,2^m+1\big)\\
&=&{\rm gcd}\big(e+l-2s,2^m+1\big)\\&=&1.\end{array}$$ Therefore, $\lambda=w^{-\frac{1}{d_{1}+d_{2}}}$ is the unique solution of Equation (\ref{eq6}). Thus, Equation (\ref{eq3}) has a unique solution and then (\ref{eq2}) holds.

It follows from Lemma \ref{lem1} and Corollary \ref{cor1} that $x^{d_{1}}+ux^{d_{2}}$ is a permutation polynomial over $\mathbb{F}_{2^n}$.\end{proof}

By Theorem \ref{thm1}, binomial permutation polynomials will be obtained if the conditions in Theorem \ref{thm1} are satisfied.
In the sequel, six classes of binomial permutation polynomials are given by applying Theorem \ref{thm1} and the following simple lemma from elementary number theory.

\begin{lem}\label{lem4} Let $r$ and $s$ be two positive integers. Then,
\begin{itemize}
  \item[(i)] $\gcd(2^r-1, 2^s-1)=2^{\gcd(r,s)}-1$;
  \item[(ii)] $\gcd(2^r-1, 2^s+1)=
  \begin{cases}
    1, & \text{if $r/\gcd(r,s)$ is odd}, \\
    2^{\gcd(r,s)}+1, & \text{if $r/\gcd(r,s)$ is even};
  \end{cases}
  $
  \item[(iii)] $\gcd(2^r+1,2^s+1)=
  \begin{cases}
    2^{\gcd(r,s)}+1, & \text{if both $r/\gcd(r,s)$ and $s/\gcd(r,s)$ is odd}, \\
    1, & \text{otherwise}.
  \end{cases}
  $
\end{itemize}
\end{lem}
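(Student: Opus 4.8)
The plan is to deduce parts (ii) and (iii) from part (i), the classical identity in (i). For (i) I would argue through the multiplicative order of $2$. Set $d=\gcd(2^r-1,2^s-1)$ and note that $d$ is odd, so $2$ is a unit modulo $d$. From $2^r\equiv 2^s\equiv 1\pmod d$ the order of $2$ modulo $d$ divides both $r$ and $s$, hence divides $\gcd(r,s)$, which gives $d\mid 2^{\gcd(r,s)}-1$. The reverse divisibility is immediate from $\gcd(r,s)\mid r$, $\gcd(r,s)\mid s$ together with the elementary fact that $a\mid b$ implies $2^{a}-1\mid 2^{b}-1$. Comparing the two divisibilities yields $d=2^{\gcd(r,s)}-1$.

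For (ii) and (iii) I would write $g=\gcd(r,s)$, $a=r/g$, $b=s/g$ with $\gcd(a,b)=1$, let $D$ denote the gcd in question, and record that $D$ is odd. The engine is the pair of factorizations $2^s+1\mid 2^{2s}-1$ and $2^{2s}-1=(2^s-1)(2^s+1)$, which lets me bring (i) to bear. The decisive observation is that the parity hypotheses are exactly $2$-adic valuation conditions: writing $v_2(\cdot)$ for the $2$-adic valuation and using $v_2(g)=\min\{v_2(r),v_2(s)\}$, the quotient $a$ is odd iff $v_2(r)\le v_2(s)$ and even iff $v_2(r)>v_2(s)$. Assuming $D>1$ and letting $k$ be the order of $2$ modulo $D$, the sign information translates cleanly: a congruence $2^{r}\equiv-1\pmod D$ forces $k\mid 2r$ but $k\nmid r$, pinning down $v_2(k)=v_2(r)+1$, and similarly for $s$. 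Matching these valuations shows that $D>1$ can occur only in the stated cases (for (iii), it forces $v_2(r)=v_2(s)$, i.e.\ both quotients odd; for (ii), it forces $v_2(r)>v_2(s)$, i.e.\ $a$ even), so outside those cases $D=1$.

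In the surviving cases I would identify the exact value $2^{g}+1$. The forward divisibility $2^{g}+1\mid D$ comes from the elementary relations $y+1\mid y^{a}+1$ for odd $a$ and $y+1\mid y^{a}-1$ for even $a$, applied with $y=2^{g}$ to the two factors defining $D$. For the reverse divisibility I would first note, via (i) and a short $\gcd(a,2b)=2$ computation, that $D\mid 2^{2g}-1=(2^{g}-1)(2^{g}+1)$, and then show $\gcd(D,2^{g}-1)=1$: any common odd prime $p$ would satisfy $2^{g}\equiv 1\pmod p$, whence $2^{r}\equiv\pm1$ collapses to $2^{r}\equiv1\pmod p$, contradicting the $-1$ coming from the defining congruence. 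Hence $D\mid 2^{g}+1$, and combined with the forward inclusion $D=2^{g}+1$. The main obstacle is precisely this bookkeeping of valuations and signs; the clean way past it is the coprimality trick $\gcd(D,2^{g}-1)=1$, which converts the coarse bound $D\mid 2^{2g}-1$ into the sharp conclusion $D\mid 2^{g}+1$.
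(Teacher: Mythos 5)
Your proof is correct. Note that the paper itself gives no proof of Lemma \ref{lem4} --- it is invoked as a ``simple lemma from elementary number theory'' and used without justification --- so there is no argument of the authors' to compare against. Your derivation (part (i) via the order of $2$ modulo the gcd, then parts (ii) and (iii) by reducing to (i) through $2^s+1\mid 2^{2s}-1$, isolating the admissible parity cases with the $2$-adic valuation of the order, and sharpening $D\mid 2^{2g}-1$ to $D\mid 2^g+1$ via $\gcd(D,2^g-1)=1$) is a standard and complete route, and all the individual steps check out.
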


\begin{prop}\label{prop1}
Let $n=2m$ for an odd integer $m$ and $k_1, k_2, k_3$ be three nonnegative integers.
Let $d_1=s(2^m-1)+e$ and $d_2=(s-l)(2^m-1)+e$. For a non-cubic element $u\in U$, the polynomial
$$x^{d_1}+ux^{d_2}$$
is a permutation polynomial over $\mathbb{F}_{2^n}$ if one of the followings holds:
\begin{enumerate}
  \item  $e=2^{k_3}+1$, $s=2^{k_3-1}-2^{k_2-1}+1$ and $l=2^{k_1}+1$, where  $k_1$, $k_2$  are odd and $\gcd(k_1,m)=1$;
  \item  $e=2^{k_3}+1$, $s=2^{k_3-1}-2^{k_2-1}$ and $l=2^{k_1}-1$, where $k_1$, $k_2$ are even and $\gcd(k_1,m)=1$;
  \item  $e=2^{k_1+1}+1$, $s=2^{k_1}+1$ and $l=2^{k_1}+1$, where $k_1$ is odd and $\gcd(k_1,m)=1$;
  \item  $e=2^{k_1+1}+1$, $s=2^{k_1}$ and $l=2^{k_1}-1$, where $k_1$ is even and $\gcd(k_1,m)=1$;
  \item  $e=2^{k_1+1}-1$, $s=2^{k_1}$ and $l=2^{k_1}+1$, where $k_1$ is odd and $\gcd(k_1(k_1+1),m)=1$;
  \item  $e=2^{k_1+1}-1$, $s=2^{k_1}-1$ and $l=2^{k_1}-1$, where $k_1$ is even and $\gcd(k_1(k_1+1),m)=1$.
  \end{enumerate}
\end{prop}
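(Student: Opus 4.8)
The plan is to show that in each of the six families the hypotheses of Theorem \ref{thm1} are satisfied with $r=3$, so that the desired conclusion is immediate; the phrase ``non-cubic element $u\in U$'' is exactly the requirement $u\in U\setminus U^3$ of Condition (iii). Since $m$ is odd we have $2^m\equiv -1\pmod 3$, whence $3\mid 2^m+1$ and $U^3$ is a proper subgroup of the cyclic group $U$, so non-cubic elements do exist. Thus the argument is a careful verification of a few coprimality facts for each parameter choice, all carried out with Lemma \ref{lem4}.

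The key preliminary step is to reduce the relevant quantities modulo the two factors of $2^n-1=(2^m-1)(2^m+1)$. Using $2^m\equiv 1\pmod{2^m-1}$ and $2^m\equiv -1\pmod{2^m+1}$ gives $d_1=s(2^m-1)+e\equiv e\pmod{2^m-1}$ and $d_1\equiv e-2s\pmod{2^m+1}$, so that
\[
\gcd(d_1,2^n-1)=1\iff\gcd(e,2^m-1)=1\ \text{ and }\ \gcd(e-2s,2^m+1)=1.
\]
Hence, for each case, I must check four things: $\gcd(l,2^m+1)=3$ (Condition (i) with $r=3$), $\gcd(e+l-2s,2^m+1)=1$ (Condition (ii)), and the two gcds above that make up the standing hypothesis $\gcd(d_1,2^n-1)=1$. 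After substituting the prescribed $e,s,l$ and simplifying, every one of these reduces to a gcd of the shape $\gcd(2^a\pm1,2^m\pm1)$, which Lemma \ref{lem4} evaluates through $\gcd(a,m)$ and the parities of $a/\gcd(a,m)$ and $m/\gcd(a,m)$.

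The decisive simplifications are as follows. The modulus $l=2^{k_1}\pm1$ yields $\gcd(l,2^m+1)=2^{\gcd(k_1,m)}+1=3$ precisely because $\gcd(k_1,m)=1$ and the parity of $k_1$ is chosen to land in the correct branch of Lemma \ref{lem4}(ii) or (iii). The combination $e+l-2s$ collapses to $2^{k_1}$ in cases 3--6 and to $2^{k_1}+2^{k_2}$ in cases 1--2, both coprime to the odd number $2^m+1$. The quantity $e-2s$ collapses to $\pm1$ in cases 3--6 and to $2^{k_2}\pm1$ in cases 1--2, again coprime to $2^m+1$ after a parity check. Finally $\gcd(e,2^m-1)=1$ is automatic for $e=2^{k_3}+1$ and $e=2^{k_1+1}+1$ because $m$ is odd, whereas for $e=2^{k_1+1}-1$ in cases 5--6 it equals $2^{\gcd(k_1+1,m)}-1$, which is exactly what upgrades the hypothesis from $\gcd(k_1,m)=1$ to $\gcd(k_1(k_1+1),m)=1$.

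Since the computations are elementary, the main obstacle is bookkeeping rather than depth: one must track the parity constraints on $k_1,k_2$ and the coprimalities of $k_1$ (or $k_1(k_1+1)$) with $m$ so that each invocation of Lemma \ref{lem4} falls into the intended case, and confirm that the even/odd branches consistently deliver $3$ for $\gcd(l,2^m+1)$ and $1$ for the other three gcds. The most delicate points are cases 1 and 2, where $e+l-2s=2^{k_1}+2^{k_2}$ and $e-2s=2^{k_2}\pm1$ require writing $2^{k_1}+2^{k_2}=2^{\min\{k_1,k_2\}}\bigl(2^{|k_1-k_2|}+1\bigr)$ and using that $|k_1-k_2|$ is even to force coprimality with $2^m+1$, together with cases 5 and 6, where the appearance of $\gcd(k_1+1,m)$ is the reason the hypothesis must be strengthened to $\gcd(k_1(k_1+1),m)=1$. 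Once all four gcds are confirmed, Theorem \ref{thm1} applies directly and finishes the proof.
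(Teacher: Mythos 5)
Your proposal is correct and follows essentially the same route as the paper: factor $\gcd(d_1,2^n-1)$ through $2^n-1=(2^m-1)(2^m+1)$, reduce each condition of Theorem \ref{thm1} to a gcd of the form $\gcd(2^a\pm1,2^m\pm1)$, and evaluate it with Lemma \ref{lem4} to get $r=3$ in every case (the paper writes out only case (1) and declares the rest similar, whereas you sketch the simplifications for all six). The computations you report, including $e+l-2s=2^{k_1}$ in cases 3--6, the factorization $2^{k_1}+2^{k_2}=2^{\min\{k_1,k_2\}}\bigl(2^{|k_1-k_2|}+1\bigr)$ in cases 1--2, and the role of $\gcd(k_1+1,m)$ in cases 5--6, all check out.
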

\begin{proof} The above six cases can be proved in the same way, so we only show the assertion for the case (1) holds as follows.

 Note that the fact ${\rm gcd}(2^m+1,2^m-1)=1$ implies
$$\begin{array}{rcl}{\rm gcd}(d_1,2^{n}-1)&=&{\rm gcd}(d_1,2^m+1){\rm gcd}(d_1,2^m-1)
\\&=&{\rm gcd}(e-2s,2^m+1){\rm gcd}(e,2^m-1)
\\&=&{\rm gcd}(2^{k_2}-1,2^m+1){\rm gcd}(2^{k_3}+1,2^m-1).
\end{array}$$
Since $m$ and $k_2$  are odd, it follows from Lemma \ref{lem4} (ii) that $\gcd(d_1, 2^m-1)=1$. In addition,
by assumption that $m$ and $k_1$ are odd, $\gcd(k_1,m)=1$ and Lemma \ref{lem4} (iii), we have
$$r={\rm gcd}(2^{k_1}+1,2^m+1)=2^{(k_1,m)}+1=3,$$
and
$${\rm gcd}(e+l-2s,2^m+1)={\rm gcd}(2^{k_1}+2^{k_2},2^m+1)=\gcd(2^{k_2-k_1}+1,2^m+1)=1.$$
From the above analysis, the integers $e, s, l$ satisfy all conditions in Theorem \ref{thm1}, so the desired conclusion follows.
\end{proof}

In Propositions \ref{prop1}, the exponents $d_i\not\equiv 1\,({\rm mod}\,2^m-1)$ for $i\in\{1,2\}$. In the sequel, we consider the binomials with 
$d_1\equiv 1\,({\rm mod} \,2^m-1)$ and $d_2=1$, i.e., $d_1$ is a Niho exponent \cite{Niho}. By Theorem \ref{thm1}, we can obtain the following proposition.

\begin{prop}\label{prop3}
Let positive integers $n$, $m$, $s$ and $d_1$ satisfy  $n=2m$, $d_{1}=s(2^m-1)+1$, and  ${\rm gcd}(d_{1},2^n-1)=1$. Then
the binomial $$x^{d_{1}}+ux$$ is a permutation polynomial over $\mathbb{F}_{2^n}$
if the following conditions are satisfied:

(i) $r:={\rm gcd}(s,2^m+1)>1$;

(ii) ${\rm gcd}(s-1,2^m+1)=1$; and

 (iii) $u\in U\setminus U^r$.
\end{prop}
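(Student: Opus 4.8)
The plan is to recognize Proposition~\ref{prop3} as the special case $e=1$ and $l=s$ of Theorem~\ref{thm1}, and then merely verify that the three hypotheses match up under this substitution. First I would set $e=1$ and $l=s$. With these choices the second exponent of Theorem~\ref{thm1} becomes
$$d_2=(s-l)(2^m-1)+e=0\cdot(2^m-1)+1=1,$$
so the binomial $x^{d_1}+ux^{d_2}$ collapses exactly to $x^{d_1}+ux$, the polynomial under consideration. The hypothesis $\gcd(d_1,2^n-1)=1$ appears in both statements, so it transfers directly, as does the requirement $d_1=s(2^m-1)+e$ once $e=1$ is imposed.

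Next I would check that the three conditions of Proposition~\ref{prop3} are precisely those of Theorem~\ref{thm1} after the substitution. Condition~(i) is immediate, since $\gcd(l,2^m+1)=\gcd(s,2^m+1)$, so the integer $r$ denotes the same quantity in both places and the requirement $r>1$ is identical. For condition~(ii), the relevant gcd in Theorem~\ref{thm1} is
$$\gcd(e+l-2s,\,2^m+1)=\gcd(1+s-2s,\,2^m+1)=\gcd(1-s,\,2^m+1)=\gcd(s-1,\,2^m+1),$$
the last equality holding because the gcd is insensitive to sign; this equals $1$ exactly when condition~(ii) of the proposition holds. Condition~(iii), namely $u\in U\setminus U^r$, is verbatim identical in the two statements.

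Since all hypotheses of Theorem~\ref{thm1} are thereby satisfied, that theorem yields directly that $x^{d_1}+ux$ is a permutation polynomial over $\mathbb{F}_{2^n}$, which completes the argument. I expect essentially no obstacle here: the result is a pure specialization of Theorem~\ref{thm1}, and the only point demanding even minimal care is the elementary identity $e+l-2s=1-s$ together with the sign-invariance of the gcd used to rewrite condition~(ii).
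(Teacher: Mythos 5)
Your proposal is correct and matches the paper exactly: the paper presents Proposition~\ref{prop3} as a direct specialization of Theorem~\ref{thm1} (with $e=1$ and $l=s$, so that $d_2=1$), and your verification of conditions (i)--(iii) under this substitution, including the sign-flip $\gcd(1-s,2^m+1)=\gcd(s-1,2^m+1)$, is precisely the intended argument.
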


 For a positive integer $d$ and $\alpha\in
\mathbb{F}_{q}^{*}$, the monomial $\alpha x^d$ over $\mathbb{F}_q$ is
a CPP if and only if  $d$ satisfies ${\rm gcd}(d,q-1)=1$ and the
  binomial $\alpha x^d+x$ is a PP. Recently, some CPPs are presented in  \cite{PYuan1, TZH}.

By Proposition \ref{prop3} and the definition of CPPs, we obtain a class of monomial CPPs.
\begin{cor}\label{cor2}
Let positive integers $n$, $m$, $s$ and $d_1$ satisfy  $n=2m$, $d_{1}=s(2^m-1)+1$, and ${\rm gcd}(d_{1},2^n-1)=1$.
If $r={\rm gcd}(s,2^m+1)>1$ and ${\rm gcd}(s-1,2^m+1)=1$, then
$$u^{-1}x^{d_{1}}$$
is a complete permutation polynomial over $\mathbb{F}_{2^n}$ for each $u\in U\setminus U^r $.
\end{cor}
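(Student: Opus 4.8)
The plan is to invoke the standard characterization of monomial complete permutation polynomials recalled just before the statement: for $\alpha\in\mathbb{F}_{2^n}^*$ and a positive integer $d$, the monomial $\alpha x^d$ is a CPP over $\mathbb{F}_{2^n}$ if and only if $\gcd(d,2^n-1)=1$ and the binomial $\alpha x^d+x$ is a permutation polynomial. I would apply this criterion with $\alpha=u^{-1}$ and $d=d_1$, so that the task splits into two verifications, one for each half of the criterion.

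The coprimality condition $\gcd(d_1,2^n-1)=1$ is already one of the hypotheses of the corollary, so nothing needs to be done there. It remains to show that $u^{-1}x^{d_1}+x$ permutes $\mathbb{F}_{2^n}$. Here the key observation is that multiplying a mapping by a fixed nonzero constant preserves the permutation property, since $y\mapsto uy$ is a bijection of $\mathbb{F}_{2^n}$. Concretely, $u\bigl(u^{-1}x^{d_1}+x\bigr)=x^{d_1}+ux$, and therefore $u^{-1}x^{d_1}+x$ is a permutation polynomial if and only if $x^{d_1}+ux$ is.

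I would then finish by quoting Proposition \ref{prop3}: its hypotheses are exactly $n=2m$, $d_1=s(2^m-1)+1$, $\gcd(d_1,2^n-1)=1$, $r=\gcd(s,2^m+1)>1$, $\gcd(s-1,2^m+1)=1$, and $u\in U\setminus U^r$, which coincide verbatim with the assumptions of the corollary. Hence $x^{d_1}+ux$ is a permutation polynomial, so $u^{-1}x^{d_1}+x$ is too, and the CPP criterion yields the claim. There is no genuine analytic obstacle in this argument; the only point requiring care is the bijective-scaling reduction, which is precisely what allows one to transfer Proposition \ref{prop3} — established for the binomial $x^{d_1}+ux$ — to the binomial $u^{-1}x^{d_1}+x$ demanded by the definition of a complete permutation polynomial.
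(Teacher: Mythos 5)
Your proposal is correct and follows exactly the route the paper intends (the paper leaves Corollary \ref{cor2} as an immediate consequence of Proposition \ref{prop3} and the stated CPP criterion): reduce via the criterion to showing $u^{-1}x^{d_1}+x$ is a PP, then rescale by $u$ to land on the binomial $x^{d_1}+ux$ handled by Proposition \ref{prop3}. The only detail you add --- the bijective-scaling step --- is exactly the right justification and is consistent with the paper's implicit argument.
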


To find some special classes of monomial CPPs satisfying the conditions in Corollary \ref{cor2}, we need the following lemma.
\begin{lem}\label{lem3}
For an odd prime $p$, denote by $r$ the order of the element $2$ in $\mathbb{Z}/p\mathbb{Z}$, where  $\mathbb{Z}/p\mathbb{Z}$ is the residue class ring of integers modulo $p$. Then

(i) if $r$ is odd, ${\rm gcd}(p,2^k+1)=1$ for every positive integer $k$;

(ii) if $r$ is even, ${\rm gcd}(p,2^k+1)=1$ if $\frac{r}{2}\nmid k$, or $\frac{r}{2}\mid k$ and $\frac{2k}{r}$ is even.

\end{lem}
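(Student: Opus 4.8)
The plan is to translate the divisibility condition $\gcd(p,2^k+1)>1$ into the congruence $2^k\equiv -1\pmod p$ (legitimate since $p$ is prime, so a nontrivial common divisor of $p$ and $2^k+1$ must be $p$ itself), and then to analyze this congruence inside the cyclic group $(\mathbb{Z}/p\mathbb{Z})^*$ using the order $r$ of $2$. The first thing I would record are two elementary consequences of $2^k\equiv -1\pmod p$: squaring gives $2^{2k}\equiv 1\pmod p$, hence $r\mid 2k$; and because $p$ is an odd prime we have $-1\not\equiv 1\pmod p$, so $2^k\not\equiv 1$ and therefore $r\nmid k$. These two facts drive the entire argument.

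For part (i), when $r$ is odd, I would argue by contradiction. If some $k$ satisfied $p\mid 2^k+1$, then $r\mid 2k$ combined with $\gcd(r,2)=1$ would force $r\mid k$, contradicting $r\nmid k$. Hence no such $k$ exists, and $\gcd(p,2^k+1)=1$ for every positive integer $k$.

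For part (ii), when $r$ is even, write $r=2\cdot\frac{r}{2}$, so that $r\mid 2k$ is equivalent to $\frac{r}{2}\mid k$. Thus $\frac{r}{2}\mid k$ is a \emph{necessary} condition for $p\mid 2^k+1$, which immediately gives $\gcd(p,2^k+1)=1$ whenever $\frac{r}{2}\nmid k$ and disposes of the first subcase. For the remaining subcase I would invoke the key structural identity $2^{r/2}\equiv -1\pmod p$: since $2$ has order $r$, the element $2^{r/2}$ has order $r/\gcd(r,r/2)=2$ in $(\mathbb{Z}/p\mathbb{Z})^*$, and by cyclicity of this group (equivalently, $x^2\equiv 1\pmod p$ has only the roots $\pm 1$) its unique order-$2$ element is $-1$. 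Writing $k=\frac{r}{2}\cdot\frac{2k}{r}$ then yields $2^k\equiv\bigl(2^{r/2}\bigr)^{2k/r}\equiv(-1)^{2k/r}\pmod p$, so $2^k\equiv -1$ holds precisely when $\frac{2k}{r}$ is odd. Consequently, when $\frac{r}{2}\mid k$ and $\frac{2k}{r}$ is even we obtain $2^k\equiv 1\not\equiv -1$, whence $\gcd(p,2^k+1)=1$, completing part (ii).

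The only genuinely non-obvious ingredient is the identity $2^{r/2}\equiv -1\pmod p$ for even $r$; everything else is bookkeeping with the order $r$ and the divisibility $r\mid 2k$. I would take care to state the cyclicity of $(\mathbb{Z}/p\mathbb{Z})^*$ explicitly when asserting that the unique order-$2$ element is $-1$, but beyond that I anticipate no real obstacle.
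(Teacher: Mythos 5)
Your proof is correct, but it proceeds along a genuinely different route from the paper's. The paper's argument is purely arithmetic: its engine is the identity $2^{tr}+1=(2^r-1)\bigl(2^{(t-1)r}+\cdots+2^r+1\bigr)+2$, which gives $\gcd\bigl(2^{tr}+1,2^r-1\bigr)=1$; combined with $p\mid 2^r-1$ (from the definition of the order $r$), this rules out $p\mid 2^k+1$ whenever $k$ is a multiple of $r$, and the paper reduces both cases to that situation. Your argument instead works group-theoretically inside $(\mathbb{Z}/p\mathbb{Z})^*$, with the structural fact $2^{r/2}\equiv -1\pmod p$ (the unique element of order $2$ in a cyclic group) as the engine, so that $2^k\equiv(-1)^{2k/r}$ settles case (ii) at once; your case (i) is also slightly more streamlined, since the contradiction comes directly from $r\mid k$ versus $r\nmid k$ (the latter because $2^k\equiv -1\not\equiv 1$), with no need for the gcd identity. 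A side benefit of your approach is that it actually yields the full characterization $p\mid 2^k+1\iff \frac{r}{2}\mid k$ and $\frac{2k}{r}$ odd (for even $r$), whereas the paper only establishes the one direction the lemma requires. Both proofs are complete and elementary; the paper's avoids invoking cyclicity of $(\mathbb{Z}/p\mathbb{Z})^*$, while yours avoids the explicit division identity.
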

\begin{proof} For positive integers $t$ and $r$, we have
$$2^{tr}+1=(2^r-1)\left(2^{(t-1)r}+2^{(t-2)r}+\cdots +2^r+1\right)+2.$$
Then ${\rm gcd}\left(2^{tr}+1,2^r-1\right)={\rm gcd}\left(2,2^r-1\right)=1$.

(i) For odd $r$, if an integer $k$ satisfies $p\,|\, 2^k+1$, we have $p\,|\,(2^k+1)(2^k-1)=2^{2k}-1$
and then $2^{2k}\equiv 1\,({\rm mod}\,p)$.
Note that the order of the element $2$ in $\mathbb{Z}/p\mathbb{Z}$ is $r$. Consequently, we have  $r\,|\,2k$ and then $r\,|\, k$. Thus, ${\rm gcd}(2^k+1,2^r-1)={\rm gcd}(2^{\frac{k}{r}r}+1,2^r-1)=1$, and then $p\,|\,{\rm gcd}(2^k+1,2^r-1)=1$, which is impossible. As a consequence,
${\rm gcd}(p,2^k+1)=1$ for every positive integer $k$.

(ii) For even $r$, suppose $p\,|\,2^k+1$ for some $k$. By a similar analysis as in (i), we have $r\,|\,2k$ and then $\frac{r}{2}\,|\,k$. Thus, in the case of $\frac{r}{2}\nmid k$, we have ${\rm gcd}(p,2^k+1)=1$. Now suppose that $\frac{r}{2}\,|\,k$ and $\frac{2k}{r}$ is even. Denote $k=t\cdot \frac{r}{2}$ for some $t$,
then $t=\frac{2k}{r}$ is even and $\frac{t}{2}$ is a positive integer. By ${\rm gcd}(2^k+1,2^r-1)={\rm gcd}(2^{\frac{t}{2}\cdot r}+1,2^r-1)=1$ and $p\,|\, 2^r-1$, we have $p\nmid 2^k+1$, i.e. ${\rm gcd}(p,2^k+1)=1$. \end{proof}

With Lemma \ref{lem3}, we obtain six classes of CPPs as follows.

\begin{prop} Let positive integers $n$, $m$, $s$, and $d$ satisfy  $n=2m$, $d=s(2^m-1)+1$. Then
$$u^{-1}x^{d}$$
is a complete permutation polynomial over $\mathbb{F}_{2^n}$ if  one of the following conditions is satisfied:
\begin{enumerate}
 \item $s=2^k+1$, $m$ and $k$ are odd with $t=\gcd(m,k)$, and $u\in U\setminus U^{2^t+1}$;
\item $s=2^k+1$, $m$ and $k$ are even with $t=\gcd(m,k)$, $\frac{k}{t}$ and $\frac{m}{t}$ are odd, and $u\in U\setminus U^{2^t+1}$;
  \item $s=6$, $m$ is odd and $5\nmid m$, $u\in U\setminus  U^3$;\label{case3}
  \item $s=15$, $m$ is odd, and $u\in U\setminus  U^3$;
  \item $s=63$, $m$ is odd and $u\in U\setminus  U^3$;
  \item $s=2^m-2$, $m$ is odd and $u\in U\setminus  U^3$.
\end{enumerate}\end{prop}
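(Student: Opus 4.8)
The plan is to reduce the entire proposition to Corollary \ref{cor2}: in each of the six cases it suffices to verify the three arithmetic hypotheses (A) $\gcd(d,2^n-1)=1$, (B) $r:=\gcd(s,2^m+1)>1$, and (C) $\gcd(s-1,2^m+1)=1$, and then to match the claimed index $r$ against $U\setminus U^r$. Before treating the cases I would record two simplifications. Since $2^n-1=(2^m-1)(2^m+1)$ and the two factors are coprime, $\gcd(d,2^n-1)=\gcd(d,2^m-1)\cdot\gcd(d,2^m+1)$. Because $d=s(2^m-1)+1\equiv 1\pmod{2^m-1}$ the first factor is $1$, and because $2^m\equiv -1\pmod{2^m+1}$ gives $d\equiv 1-2s\pmod{2^m+1}$, condition (A) collapses to $\gcd(2s-1,2^m+1)=1$. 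Thus for each case I only have to evaluate the three quantities $\gcd(2s-1,2^m+1)$, $\gcd(s,2^m+1)$, and $\gcd(s-1,2^m+1)$.

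For cases (1) and (2), where $s=2^k+1$, I would use Lemma \ref{lem4}. Writing $t=\gcd(k,m)$, Lemma \ref{lem4}(iii) gives $r=\gcd(2^k+1,2^m+1)=2^t+1$: in case (1) both $k,m$ odd force $k/t,m/t$ odd, and in case (2) this is assumed; hence $r=2^t+1>1$ and the set $U\setminus U^{2^t+1}$ is the correct one. Condition (C) is immediate since $s-1=2^k$ is coprime to the odd number $2^m+1$. For (A) I would note $2s-1=2^{k+1}+1$ and apply Lemma \ref{lem4}(iii) to $\gcd(2^{k+1}+1,2^m+1)$: in case (1) $k+1$ is even while $m$ is odd, and in case (2) $k+1$ is odd while $m$ is even, so in both situations exactly one of $k+1,m$ is even, whence the corresponding cofactor over $\gcd(k+1,m)$ is even and the gcd is $1$.

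For cases (3)--(6), where $s$ is a fixed value, I would compute $r$ directly and invoke Lemma \ref{lem3} for (A) and (C). Here $m$ odd gives $3\mid 2^m+1$, so $3\mid r$; one checks $r=3$ exactly in (3), (4), (6), while in (5) $r=\gcd(63,2^m+1)\in\{3,9\}$. Conditions (C) and (A) reduce to coprimality of $2^m+1$ with fixed integers: in (3), $\gcd(5,2^m+1)=1$ for (C) and $\gcd(11,2^m+1)=1$ for (A); in (4), $\gcd(7,2^m+1)=1$ and $\gcd(29,2^m+1)=1$; in (5), $s-1=2\cdot 31$ gives $\gcd(31,2^m+1)=1$ and $2s-1=5^3$ gives $\gcd(5^3,2^m+1)=1$; in (6), $s-1=2^m-3$ is coprime to $2^m+1$ since their difference is $4$ and both are odd, while $2s-1\equiv -7\pmod{2^m+1}$ reduces (A) to $\gcd(7,2^m+1)=1$. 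Each coprimality follows from Lemma \ref{lem3} together with $m$ odd and the orders of $2$ modulo $5,7,11,29,31$ being $4,3,10,28,5$ respectively.

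The main obstacle is condition (A), the only genuinely delicate point, since it requires pinning down the multiplicative order of $2$ modulo the relevant prime and then reading off coprimality from Lemma \ref{lem3}. The sharpest instance is case (3): the order of $2$ modulo $11$ is $10$, so $11\mid 2^m+1$ exactly when $m\equiv 5\pmod{10}$, i.e. when $m$ is odd and $5\mid m$; the extra hypothesis $5\nmid m$ is therefore precisely what guarantees $\gcd(11,2^m+1)=1$. A second point needing care is case (5): when $3\mid m$ one has $r=\gcd(63,2^m+1)=9$ rather than $3$, but since $U^9=(U^3)^3\subseteq U^3$ and $3\mid r$ always, $U\setminus U^3\subseteq U\setminus U^r$, so Corollary \ref{cor2} applied with the true $r$ still yields the complete permutation property for every $u\in U\setminus U^3$ as stated.
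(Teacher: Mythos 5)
Your proposal is correct and follows essentially the same route as the paper: reduce each case to the three hypotheses of Corollary \ref{cor2} and verify them via Lemmas \ref{lem4} and \ref{lem3} (the paper writes out only case (3) in detail, exactly as you do, with the same reduction of $\gcd(d,2^n-1)$ to $\gcd(11,2^m+1)$). Your additional observation in case (5) --- that $r=\gcd(63,2^m+1)$ can equal $9$ when $3\mid m$, but $U\setminus U^3\subseteq U\setminus U^9$ so the stated conclusion still holds --- is a genuine subtlety that the paper's omitted proofs gloss over, and you handle it correctly.
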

\begin{proof} The first two cases can be proved by employing Corollary \ref{cor2} and Lemma \ref{lem4} in a similar way as in the proof of Proposition \ref{prop1}, whilst Lemma \ref{lem3} is necessary for the proofs of the other four cases.
Here we only give the proof of Case \ref{case3}: $s=6$, $m$ is odd, $5\nmid m$ and $u\in U\setminus  U^3$. Other cases can be similarly proved and their proofs are omitted.

Since $d=6(2^m-1)+1$, by Corollary \ref{cor2}, one needs to  check
whether the following conditions are satisfied:

(i) ${\rm gcd}(d,2^n-1)=1$;

(ii) ${\rm gcd}(6,2^m+1)> 1$; and

(iii) ${\rm gcd}(6-1,2^m+1)=1$.

 Note that $$(d, 2^n-1)=(6(2^m-1)+1, 2^m+1)(6(2^m-1)+1,2^m-1)=(11, 2^m+1).$$
 Since the order of $2$ in $\mathbb{Z}/11\mathbb{Z}$ is $10$, it follows from Lemma \ref{lem3} (ii) that ${\rm gcd}(11,2^m+1)=1$ if $5\nmid m$, and the
 first condition is met. It is clear that $\gcd(6,2^m+1)=3$ since $m$ is odd. In addition, the order of $2$ in
  $\mathbb{Z}/5\mathbb{Z}$ is $4$, it follows again from Lemma \ref{lem3} (ii) that ${\rm gcd}(5,2^m+1)=1$ since $m$ is odd. The above three conditions are thus satisfied. The desired conclusion then follows from Corollary \ref{cor2}.
\end{proof}

\section{Concluding Remarks and Further Works}

More classes of binomial permutation polynomials having a form as in (\ref{form1}) would be obtained if new parameters
 satisfying the conditions in Theorem \ref{thm1} are found. For the case of $t\geq 3$, it is interesting to present a criterion similar to Theorem \ref{thm1}.

Some examples of trinomial permutation polynomials having a form as in (\ref{form1}) can be found. For instance, with the help of a computer,  a conjecture is presented as follows.

{\bf Conjecture 1}:
Let $n=2m$ for an odd integer $m$. Then both
$$f(x)=x^{2^{m}+4}+x^{2^{m+1}+3}+x^{2^{m+2}+1}$$
and $$g(x)=x^{2^m}+x^{2^{m+1}-1}+x^{2^{2m}-2^m+1}$$
are permutation polynomials over $\mathbb{F}_{2^n}$.

This conjecture has been validated for $m=3$, $5$, $7$ and $9$. The main difficulty to prove the conjecture lies in
determining the number of solutions to the equations in Equation (\ref{eq1}).

It is also interesting to investigate the permutation behavior of the $p$-ary version of the functions having a form as in  (\ref{form1}), where $p$ is an odd prime.
In this case, the polar coordinate representation does not work. However, every nonzero $x\in \mathbb{F}_{p^{2m}}$ can be represented uniquely as $x=\alpha^i\beta^j$,
where $\beta$ is a primitive element of $\mathbb{F}_{p^{2m}}$, $\alpha=\beta^{p^m+1}$, $i=0, 1,\cdots, p^m-2$ and
$j=0, 1,\cdots, p^m$.

\end{document}